\documentclass[12pt, reqno]{amsart}
\usepackage{mathrsfs}
\usepackage{geometry}
\usepackage{titletoc}
\usepackage{mathpazo}
\usepackage{amsmath}
\usepackage{amssymb}
\usepackage{enumitem}
\usepackage{mathtools}
\usepackage[table]{xcolor}
\usepackage[all]{xy}
\usepackage{tikz}
\usepackage{tikz-cd}
\usepackage{indentfirst}
\usepackage{setspace}
\usepackage[colorlinks,linkcolor=red,anchorcolor=green,citecolor=blue]{hyperref}
\hypersetup{linktocpage = true}
\usepackage{rotating}

\usepackage{ytableau}
\usepackage{longtable}
\newcolumntype{M}[1]{>{\centering\arraybackslash}m{#1}}

\allowdisplaybreaks[4]

\usepackage{mathrsfs}
\DeclareFontFamily{OMS}{rsfs}{\skewchar\font'60}
\DeclareFontShape{OMS}{rsfs}{m}{n}{<-5>rsfs5 <5-7>rsfs7 <7->rsfs10 }{}
\DeclareSymbolFont{rsfs}{OMS}{rsfs}{m}{n}
\DeclareSymbolFontAlphabet{\scr}{rsfs}
\DeclareSymbolFontAlphabet{\scr}{rsfs}
\usepackage[T1]{fontenc}
\theoremstyle{plain}
\newtheorem{thm}{Theorem}[section]
\newtheorem{lemma}[thm]{Lemma}
\newtheorem{prop}[thm]{Proposition}
\newtheorem{cor}[thm]{Corollary}

\theoremstyle{definition}

\newtheorem{remark}[thm]{Remark}

\renewcommand{\bar}{\overline}
\newcommand{\eps}{\varepsilon}
\renewcommand{\phi}{\varphi}

\newcommand{\bd}{\begin{enumerate}}
\newcommand{\ed}{\end{enumerate}}

\renewcommand{\tilde}{\widetilde}

\newcommand{\CC}{\mathbb {C}}

\numberwithin{equation}{section}

\usepackage{fancyhdr}
\title{A Demailly-type approximation of singular Finsler metrics}
\subjclass[2020]{32U05, 32L05, 14F18}
\keywords{Singular Finsler metric; multiple coarse \(L^p\) extension; Griffiths positivity; Bergman kernel}
\author{Zhuo Liu}
\address{Mathematical Science Research Center, Chongqing University of Technology, No. 69, Hongguang Avenue, Banan District, Chongqing 400054, China.}
\email{liuzhuo@cqut.edu.cn; liuzhuo@amss.ac.cn}

\begin{document}

\begin{abstract}
We establish a Demailly-type approximation theorem for singular Finsler metrics satisfying the multiple coarse \(L^p\) extension property on holomorphic vector bundles. As an application, we obtain a new proof of the fact that the multiple coarse \(L^p\) extension property characterizes Griffiths positivity, as shown by Deng--Wang--Zhang--Zhou.
\end{abstract}

\maketitle

\section{Introduction}

Plurisubharmonicity and Griffiths/Nakano positivity play fundamental roles in several complex variables and complex geometry, yielding numerous important results, including H\"ormander's \(L^2\) estimates for the \(\overline{\partial}\)-equations \cite{Dem-82, Hor65}, the Ohsawa--Takegoshi \(L^2\) extension theorem \cite{OT87}, and the optimal \(L^2\) extension theorem \cite{Blocki13, GuanZhou12, GuanZhou15}. Plurisubharmonic functions, which need not be smooth, offer significant advantages in addressing many problems. Similarly, there has been growing interest in singular metrics with positivity properties on vector bundles (\cite{BP08, Cataldo98, PT18, Rau15}). 

Let \(E\to X\) be a holomorphic vector bundle of rank \(r\) over a complex manifold \(X\), and let \(h\) be a singular Finsler metric on \(E\), i.e., a function \(|\cdot|_h:E\to[0,+\infty]\), homogeneous of degree one on each fiber, finite almost everywhere and positive on non-zero vectors almost everywhere. The dual Finsler metric \(h^*\) on \(E^*\) is defined by
\[
|u|_{h^*}^2=\sup_{v\in E_x\setminus\{0\}}\frac{|\langle u,v\rangle|^2}{|v|_h^2},\qquad u\in E_x^*.
\]
A singular Finsler metric \(h\) on \(E\) is said to be \emph{Griffiths semi-negative} if for every local holomorphic section \(s\) of \(E\), the function \(|s|^2_h\) is plurisubharmonic. It is said to be \emph{Griffiths semi-positive} if \((E^*,h^*)\) is Griffiths semi-negative.

Recently, in \cite{DNW21, DNWZ22, DWZZ18}, Deng, Ning, Wang, Zhang, Zhou established the converse $L^2$ theory by giving alternative characterizations of plurisubharmonicity and Griffiths/Nakano positivity in terms of various $L^2$-conditions for $\overline{\partial}$. These characterizations are significant precisely because they avoid metric differentiations, thereby offering workable definitions of various positivity notions for singular Hermitian metrics on holomorphic vector bundles.  

A key problem in the study of singular metrics on holomorphic vector bundles is the approximation of singular metrics with positivity properties. Previous results in this direction can be found in \cite{BP08, DNWZ23, Liu26, Rau15}, where the approximating metrics are constructed locally by convolution. In contrast, Demailly's approximation theorem \cite{Dem92}, which states that every plurisubharmonic function can be approximated by the Bergman kernels of weighted \(L^2\) spaces, is a fundamental tool in complex analytic geometry. In this paper, we study a vector-bundle analogue of it for singular Finsler metrics satisfying the \emph{multiple coarse \(L^p\) extension property} introduced by Deng--Wang--Zhang--Zhou \cite[Definition 1.1]{DWZZ18}.

\begin{thm}\label{thm:abstract}
Let \(p>0\), and let \(E\to X\) be a holomorphic vector bundle endowed with a  singular Finsler metric \(h\) such that  \(|u|_{h^*}\) is upper semi-continuous for every local holomorphic section \(u\) of \(E^*\).  Denote by \(H^p(X,E,h)\) the space of holomorphic sections \(s\) of \(E\) with \(\|s\|_{L^p(h)}<+\infty\). Assume that \((E,h)\) satisfies the multiple coarse \(L^p\) extension property, i.e., for each \(m\ge1\), every \(x\in X\), and every \(a\in E_x\setminus\{0\}\), there is \(s\in H^p(X,E^{\otimes m},h^{\otimes m})\) with \(s(x)=a^{\otimes m}\) and
\[
\|s\|_{L^p(h^{\otimes m})}^p\le C_m|a|_{h(x)}^{pm},
\qquad \lim_{m\to\infty}\frac{\log C_m}{m}=0.
\] For each \(m\ge1\), define \(h_m^*\) on \(E^*\) by
\begin{equation}\label{eq:h_m-tensor}
|u|_{h_m^*}:=\left(\left(\sup_{\|s\|_{L^p(h^{\otimes m})}\le1}|\langle u^{\otimes m},s(x)\rangle|\right)^{*}\right)^{1/m},\qquad u\in E_x^*,
\end{equation}
where \((\cdot)^*\) denotes the upper semi-continuous regularization. Then \(h_m^*\) is a well-defined singular Finsler metric on \(E^*\), Griffiths semi-negative, and satisfies
\[
C_m^{-1/(pm)}|u|_{h^*}\le |u|_{h_m^*}\le C_r^{1/(pm)}\sup_{B(x,r)}|u|_{h^*},
\]
with \(C_r=1/\operatorname{Vol}(B(x,r))\), for every local holomorphic section \(u\) of \(E^*\), every \(x\in X\), and sufficiently small \(r>0\). Moreover, \(|u|_{h_m^*}\to|u|_{h^*}\) pointwise and in \(L^1_{\mathrm{loc}}\) as \(m\to\infty\).
\end{thm}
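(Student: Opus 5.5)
The plan is to set $\phi_m(u):=\sup_{\|s\|_{L^p(h^{\otimes m})}\le1}|\langle u^{\otimes m},s(x)\rangle|$ for $u\in E_x^*$, so that $|u|_{h_m^*}=(\phi_m(u)^*)^{1/m}$. I will first prove the two-sided bound for $\phi_m$ itself, then transfer it through the regularization, and finally deduce positivity and convergence from the bound.

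\textbf{Lower bound.} Fix $x$ and $u\in E_x^*$. For any $a\in E_x\setminus\{0\}$, the multiple coarse $L^p$ extension property gives $s$ with $s(x)=a^{\otimes m}$ and $\|s\|_{L^p}^p\le C_m|a|_h^{pm}$. Normalizing $s$ yields
\[
\phi_m(u)\ge \frac{|\langle u,a\rangle|^m}{C_m^{1/p}|a|_h^m}.
\]
Taking the supremum over $a$ gives $\phi_m(u)^{1/m}\ge C_m^{-1/(pm)}|u|_{h^*}$. Since regularization only increases values, the same lower bound holds for $|u|_{h_m^*}$.

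\textbf{Upper bound.} Work in a local trivialization near $x$. For $\|s\|_{L^p}\le1$, the function $y\mapsto|\langle u(y)^{\otimes m},s(y)\rangle|^{p/m}$ is plurisubharmonic, since it is the modulus of a holomorphic function raised to a positive power. The sub-mean value inequality on $B(x,r)$ gives
\[
|\langle u^{\otimes m},s\rangle|^{p/m}(x)\le C_r\int_{B(x,r)}|\langle u^{\otimes m},s\rangle|^{p/m}.
\]
I would rather apply the sub-mean value property to $|\langle u^{\otimes m},s\rangle|^{p}$ directly. The pointwise duality bound $|\langle u^{\otimes m},s\rangle|\le|u|_{h^*}^m|s|_{h^{\otimes m}}$ then yields
\[
|\langle u^{\otimes m},s(x)\rangle|^p\le C_r\sup_{B(x,r)}|u|_{h^*}^{pm}\,\|s\|^p_{L^p}.
\]
This duality bound requires interpreting $|\cdot|_{h^{\otimes m}}$ as the Finsler norm for which $\langle u^{\otimes m},\cdot\rangle$ has dual norm at most $|u|_{h^*}^m$. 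I take this as part of the definition of $h^{\otimes m}$ in the paper, where it should be the dual of the symmetric-power dual norm. Taking $m$-th and $p$-th roots gives $\phi_m(u)^{1/m}\le C_r^{1/(pm)}\sup_{B(x,r)}|u|_{h^*}$. The right-hand side is upper semicontinuous in $x$ up to shrinking $r$: at nearby points $x'$ one uses $B(x',r')\subset B(x,r)$, with $C_{r'}\to C_r$ as $r'\to r$. So the bound persists after regularization, possibly with $r$ replaced by a slightly larger radius, which is harmless since $r$ is arbitrary. In particular $\phi_m^*<\infty$, so $h_m^*$ is finite, homogeneous of degree one, and positive almost everywhere by the lower bound. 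Hence it is a well-defined singular Finsler metric.

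\textbf{Griffiths semi-negativity.} For a local holomorphic section $u$ of $E^*$, the function $\phi_m(u)$ is the supremum of the family $|\langle u^{\otimes m},s\rangle|^{2/m}$, or in the form needed, $\log$ of the family $|f_s|$ with $f_s$ holomorphic. The family is locally uniformly bounded by the upper bound. Therefore $(\sup_s\frac{2}{m}\log|f_s|)^*$ is plurisubharmonic, so $\log|u|^2_{h_m^*}$ is psh and hence $|u|^2_{h_m^*}$ is psh. The one point needing care is that the regularization of $\phi_m(u)^{2/m}$ in the section variable agrees with the regularization defined on the fibers of $E^*$. Because $u$ is a holomorphic section, which is continuous, regularizing on the total space and then restricting gives at least the regularization along $u$, with the reverse inequality following by homogeneity. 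I expect this compatibility to be the main technical obstacle. If it fails, I would fall back to defining the regularization along sections.

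\textbf{Convergence.} Since $\log C_m/m\to0$, we have $C_m^{-1/(pm)}\to1$, and $C_r^{1/(pm)}\to1$ for fixed $r$. This gives
\[
|u|_{h^*}(x)\le\liminf_m|u|_{h_m^*}\le\limsup_m|u|_{h_m^*}\le\sup_{B(x,r)}|u|_{h^*}.
\]
Letting $r\to0$ and using the upper semicontinuity of $|u|_{h^*}$ yields pointwise convergence. The functions are locally uniformly bounded by the upper bound, so dominated convergence gives $L^1_{\mathrm{loc}}$ convergence.
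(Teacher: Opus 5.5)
Your proposal follows the paper's proof essentially step by step: the lower bound comes from the extension property at near-extremal $a$, the upper bound from the sub-mean-value inequality for $|\langle u^{\otimes m},s\rangle|^p$ plus the duality inequality, plurisubharmonicity from the regularized supremum of a locally uniformly bounded family, and convergence by squeezing. You are also more careful than the paper about carrying the bounds through the regularization and about the $L^1_{\mathrm{loc}}$ convergence. The one slip is in your compatibility step, which the paper passes over silently: the reverse inequality does not follow from homogeneity when $\operatorname{rank}E\ge2$, since nearby vectors of $E^*_y$ need not be multiples of $u(y)$. It does follow, however, because the polynomials $\xi\mapsto\langle\xi^{\otimes m},s(y)\rangle$ with $\|s\|_{L^p(h^{\otimes m})}\le1$ have coefficients bounded uniformly in $s$ and locally uniformly in $y$ (apply your upper estimate to finitely many constant sections in a trivialization), and are therefore equi-Lipschitz in $\xi$. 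For plurisubharmonicity alone you can bypass the issue: these functions are plurisubharmonic on the total space of $E^*$, so their regularized supremum is too, and it pulls back to a plurisubharmonic function under any holomorphic section.
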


\begin{remark}\label{rem:sm-version}
\begin{enumerate}
\item[(i)] If \(p=2\) and \(h\) is a Hermitian metric, then \(H^2(X,E,h)\) is a Hilbert space. Let \(\{\sigma_j\}_{j\ge1}\) be an orthonormal basis. Then the expression \eqref{eq:h_m-tensor} for \(h_m^*\) reduces to the explicit form 
\[
|u|^2_{h_m^*} =
\left(
\sum_{j\ge1}
|\langle u^{\otimes m},\sigma_j^{(m)}(x)\rangle|^2
\right)^{1/m},
\qquad u\in E_x^*.
\]
\item[(ii)] When \(p=2\) and \(E=L\) is a line bundle with \(h=e^{-\varphi}\), the Ohsawa--Takegoshi \(L^2\) extension theorem for the plurisubharmonic weight \(m\varphi\) directly yields the multiple coarse\(L^2\) extension property, and Theorem~\ref{thm:abstract} reduces to the classical Demailly approximation theorem for plurisubharmonic functions.
\end{enumerate}
\end{remark}

As an immediate consequence, we recover the following result of Deng--Wang--Zhang--Zhou, with a new proof based on the approximation theorem above.

\begin{cor}[{\cite[Theorem 1.2]{DWZZ18}}]\label{cor:DWZZ}
Let \(p\ge1\), and let \(E\to X\) be a holomorphic vector bundle of rank \(r\), endowed with a singular Finsler metric \(h\) such that \(|u|_{h^*}\) is upper semi-continuous for every local holomorphic section \(u\) of \(E^*\). Assume that \((E,h)\) satisfies the multiple coarse \(L^p\) extension property. Then \(h\) is Griffiths semi-positive.
\end{cor}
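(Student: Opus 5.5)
Corollary~\ref{cor:DWZZ} follows from Theorem~\ref{thm:abstract}, provided Griffiths semi-negativity survives a decreasing-type limit. Fix a local holomorphic section \(u\) of \(E^*\) on an open set \(U\subset X\). We must show that \(|u|^2_{h^*}\) is plurisubharmonic on \(U\). Theorem~\ref{thm:abstract} supplies Griffiths semi-negative metrics \(h_m^*\) on \(E^*\). Hence each \(\phi_m:=\log|u|_{h_m^*}=\frac1m\log\bigl(\sup_s|\langle u^{\otimes m},s\rangle|\bigr)^*\) is plurisubharmonic, or identically \(-\infty\) on a component. This holds because \(\log|u|_{h_m^*}\) is the logarithm of the usc regularization of a supremum of moduli of holomorphic functions \(\langle u^{\otimes m},s\rangle\). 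Log-plurisubharmonicity is in fact stronger than plurisubharmonicity of \(|u|^2_{h_m^*}\). Theorem~\ref{thm:abstract} also gives \(\phi_m\to\phi:=\log|u|_{h^*}\) pointwise and in \(L^1_{\rm loc}\). The hypothesis \(p\ge1\) enters only insofar as the theorem applies.

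The plan is to show that \(\phi\) is plurisubharmonic, so that \(|u|^2_{h^*}=e^{2\phi}\) is plurisubharmonic. There are two ingredients.

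\emph{Uniform upper bound.} Since \(|u|_{h^*}\) is usc, it is locally bounded above. The right-hand inequality of Theorem~\ref{thm:abstract} gives, on a small ball,
\[
\phi_m(x)\le \tfrac{1}{pm}\log C_r+\log\sup_{B(x,r)}|u|_{h^*}.
\]
This is locally uniformly bounded above in \(m\) once \(r\) is fixed; shrink the neighborhood so the balls stay in a compact set.

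\emph{Identification of the limit.} A family of psh functions that is locally uniformly bounded above and converges in \(L^1_{\rm loc}\) has a psh limit, modulo modification on a null set. Write \(\psi:=(\limsup_m\phi_m)^*\); it is psh and \(\psi=\phi\) almost everywhere. Pointwise convergence gives \(\limsup\phi_m=\phi\) everywhere, so \(\psi=\phi^*\). Since \(\phi\) is usc, \(\phi^*=\phi\), and therefore \(\phi\) is psh. This route is cleaner than the \(L^1\) one: it uses only the upper bound and pointwise convergence.

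One caveat: if \(|u|_{h^*}\equiv0\) on a component, the claim is trivial. Otherwise \(\phi\not\equiv-\infty\), because \(h\) is positive on nonzero vectors almost everywhere.

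The main obstacle is the regularization step, namely ensuring that \(\limsup\phi_m\) agrees with \(\phi\) everywhere, not merely almost everywhere. This is exactly where the pointwise convergence from Theorem~\ref{thm:abstract} and the upper semicontinuity hypothesis are used. Finally, \(e^{2\phi}\) is psh because \(t\mapsto e^{2t}\) is convex and increasing. Since \(u\) was an arbitrary local holomorphic section, \((E^*,h^*)\) is Griffiths semi-negative, i.e.\ \(h\) is Griffiths semi-positive.
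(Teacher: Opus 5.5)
Your proposal is correct and follows essentially the paper's route: the corollary is read off from Theorem~\ref{thm:abstract}, because \(|u|_{h^*}\) is the everywhere pointwise limit of the plurisubharmonic (indeed log-plurisubharmonic) functions \(|u|_{h_m^*}\), these are locally uniformly bounded above by the right-hand estimate, and \(|u|_{h^*}\) is upper semi-continuous, so \((\limsup_m\phi_m)^*=\phi^*=\phi\) is plurisubharmonic. You are simply spelling out the step the paper calls immediate; the one minor slip is that the theorem gives \(L^1_{\rm loc}\) convergence of \(|u|_{h_m^*}\), not of their logarithms, but your argument never uses that.
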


One of the fundamental motivations of Demailly's approximation theorem is to construct plurisubharmonic approximations with analytic singularities that preserve the multiplier ideal sheaf. This addresses the existence problem of analytic weights for multiplier ideal sheaves and naturally leads to Demailly's strong openness conjecture \cite{Demailly2000, DK2001, Kiselman}. In \cite{GuanZhouSOC2013}, Guan--Zhou affirmatively solved Demailly's strong openness conjecture and  established the existence of analytic weights for multiplier ideal sheaves. Thus we investigate the analogous problems in the vector bundle setting. More precisely, we replace multiplier ideal sheaves by multiplier submodule sheaves \(\mathcal E(h)\) associated with singular Finsler metrics \(h\), where \(\mathcal E(h)\) is defined by
\[
\mathcal E(h)_x:=\{u\in\mathcal O(E)_x:\ |u|_h^2\text{ is integrable in some neighborhood of }x\},
\]
and obtain the following result.

\begin{prop}\label{prop:multiplier-sheaves-strong-openness}
Let \(D\subset\CC^n\) be a bounded domain, \(E\to D\) a trivial holomorphic vector bundle of rank \(r\), and \(h\) a Hermitian metric on \(E\) such that \(|u|_{h^*}\) is upper semi-continuous for every local holomorphic section \(u\) of \(E^*\). Assume that \((E,h)\) satisfies the multiple coarse \(L^2\) extension property and the strong openness property: for every relatively compact open subset \(U\Subset D\) over which \(E\) is trivial, there exists \(\eps>0\) such that
\[
\mathcal E(h)_x=\mathcal E(h(\det h)^\eps)_x
\]
for all \(x\in U\). Then \(
\mathcal E(h)=\mathcal E\!\left(h_m^{m/(m-1)}\right)
\) for $m$ large enough.
\end{prop}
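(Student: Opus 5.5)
The plan is to prove the two inclusions separately. Throughout, \(h_m\) denotes the Finsler metric on \(E\) dual to the metric \(h_m^*\) of Theorem~\ref{thm:abstract}, and \(h_m^{m/(m-1)}\) denotes the metric with \(|u|_{h_m^{m/(m-1)}}^2=|u|_{h_m}^{2m/(m-1)}\). The easy direction comes from the lower bound \(C_m^{-1/(2m)}|v|_{h^*}\le|v|_{h_m^*}\) of Theorem~\ref{thm:abstract} (with \(p=2\)). Dualizing reverses the inequality, so \(|u|_{h_m}\le C_m^{1/(2m)}|u|_h\) pointwise. This controls \(h_m\) by \(h\), but not yet the power \(m/(m-1)\).

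For \(\mathcal E(h)\subset\mathcal E(h_m^{m/(m-1)})\) I would use the strong openness hypothesis. Fix \(x\in U\Subset D\) and let \(\eps>0\) be as in the hypothesis, so that \(\mathcal E(h)_x=\mathcal E(h(\det h)^\eps)_x\). The goal is to dominate \(|u|_{h_m}^{2m/(m-1)}=|u|_{h_m}^2\,|u|_{h_m}^{2/(m-1)}\) by \(|u|_h^2\) times a factor that is small relative to \((\det h)^{\eps}\), up to a locally integrable function.
\begin{enumerate}
\item The first factor satisfies \(|u|_{h_m}^2\le C_m^{1/m}|u|_h^2\) by the bound above.
\item For the second factor, \(|u|_{h_m}^{2/(m-1)}\le C_m^{1/(m(m-1))}|u|_h^{2/(m-1)}\), and \(|u|_h^{2/(m-1)}\) is at most \(|u|^{2/(m-1)}\) times a power \(\lambda_{\max}(h)^{1/(m-1)}\) of the top eigenvalue of \(h\) in a fixed frame.
\item Since \(1/(m-1)\to0\) while \(\eps\) is fixed, for \(m\) large this extra factor should be absorbed by splitting \(h=h^{1-\delta}h^{\delta}\) and using \(\mathcal E(h)=\mathcal E(h(\det h)^\eps)\), together with a Hölder inequality on a small ball around \(x\). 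If a pointwise comparison of \(\lambda_{\max}\) with \(\det h\) is unavailable, I would instead apply Hölder with exponents \(1+\eps'\) and its conjugate, and use the local integrability of \(|u|_h^{2(1+\eps')}\) for a suitable \(\eps'\), which should again follow from strong openness.
\end{enumerate}

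The converse \(\mathcal E(h_m^{m/(m-1)})\subset\mathcal E(h)\) is the Demailly–Kollár type step, and I expect it to be the main obstacle. Introduce the pointwise ratio
\[
\rho_m(y):=\sup_{v\in E_y^*\setminus\{0\}}\frac{|v|_{h_m^*}^2}{|v|_{h^*}^2}.
\]
By duality \(|u|_h^2\le\rho_m|u|_{h_m}^2\) pointwise. Hölder with exponents \(m/(m-1)\) and \(m\) then gives
\[
\int_V|u|_h^2\le\Bigl(\int_V|u|_{h_m}^{2m/(m-1)}\Bigr)^{(m-1)/m}\Bigl(\int_V\rho_m^{m}\Bigr)^{1/m}.
\]
So everything reduces to proving local integrability of \(\rho_m^m\) on a neighbourhood \(V\) of \(x\). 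Because \(h\) is Hermitian, \(|v^{\otimes m}|_{(h^{\otimes m})^*}=|v|_{h^*}^m\), and Cauchy–Schwarz gives
\[
\rho_m^m(y)\le\sup_{\|s\|_{L^2(h^{\otimes m})}\le1}|s(y)|^2_{h^{\otimes m}}.
\]
The right-hand side is the Bergman kernel trace of \(H^2(D,E^{\otimes m},h^{\otimes m})\), up to the usc regularization, which is harmless almost everywhere. Its integral against Lebesgue measure is not finite in general, since the integral of the trace against the weight equals the dimension.

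To get around this, I would run the argument with \(h\) replaced by the twisted metric \(h(\det h)^{\eps}\), which is allowed by strong openness. I would then bound the Bergman kernel trace via the mean value inequality on small balls, as in the proof of the upper bound in Theorem~\ref{thm:abstract}, combined with Parseval in a finite local frame. The aim is \(\int_V\rho_m^m(\det h)^{\eps}<\infty\), arranging that the \((\det h)^{\eps}\) factor exactly compensates the loss in the Hölder step. This requires choosing the Hölder exponents and \(\eps\) compatibly and taking \(m\) large, which is where the restriction "\(m\) large enough" enters. If this compensation cannot be achieved directly, the fallback is to prove the inclusion only at the level of the \((\det h)^{\eps}\)-twisted sheaf and then invoke strong openness once more to remove the twist.
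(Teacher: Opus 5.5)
\textbf{Verdict: genuine gap.} The first inclusion is fine; the converse has a missing key idea.

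Your treatment of $\mathcal E(h)\subset\mathcal E(h_m^{m/(m-1)})$ works. The fallback in your step 3 is exactly the paper's argument: strong openness gives local integrability of $|u|_h^{2(1+\eps')}$, and then $|u|_{h_m}^{2m/(m-1)}\le C|u|_h^{2m/(m-1)}$ with $m/(m-1)\le 1+\eps'$. Your primary route also closes. Upper semi-continuity of $|u|_{h^*}$ on a constant frame gives $h\ge c\,\mathrm{Id}$ locally, hence $\lambda_{\max}(h)\le C\det h$. Therefore $|u|_{h_m}^{2m/(m-1)}\le C|u|_h^2\bigl(1+(\det h)^{\eps}\bigr)$ as soon as $1/(m-1)\le\eps$.

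The gap is in the converse. Your reduction is correct: $|u|_h^2\le\rho_m|u|_{h_m}^2$, then H\"older with exponents $m/(m-1)$ and $m$, and $\rho_m^m\le B_m:=\sum_j|\sigma_j^{(m)}|^2_{h^{\otimes m}}$. But you abandon this route for a wrong reason. The identity $\int_D B_m\,dV=\dim H^2$ concerns all of $D$. What you need is $\int_K B_m\,dV<\infty$ for $K\Subset D$, and that is true.

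The missing idea is the strong Noetherian property of coherent sheaves, which is the paper's Lemma~\ref{lem:finite-control}:
\begin{enumerate}
\item The subsheaves of $\mathcal O_{D\times D}^{r^2}$ generated by $\bigl(\sigma_k^i(z)\overline{\sigma_k^j(\bar w)}\bigr)_{i,j}$, $k\le N$, stabilize on $K\times K$.
\item The locally uniformly convergent full series is a section of the limit sheaf.
\item Setting $w=\bar z$ gives $B_m\le C_K\sum_{j\le N_K}|\sigma_j^{(m)}|^2_{h^{\otimes m}}$ on $K$.
\item Since each $\sigma_j^{(m)}$ has norm one, $\int_K B_m\,dV\le C_KN_K$.
\end{enumerate}
The paper avoids H\"older by noting $|s|_h^2\le B_m$ on $\{|s|_h^2\ge|s|_{h_m}^{2m/(m-1)}\}$. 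With the lemma in hand, your H\"older version works equally well, for every $m\ge2$ and without strong openness.

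Your proposed substitute cannot replace this lemma, for three reasons.
\begin{enumerate}
\item The mean value inequality only controls the unweighted kernel. It gives $\rho_m^m(y)\le C\lambda_{\max}(h(y))^m$, which is not locally integrable in general. For $h=e^{-\phi}$ on a line bundle it is $e^{-m\phi}$, which is non-integrable for large $m$ whenever $\phi$ has a positive Lelong number. The point is that each $\sigma_j^{(m)}$ is individually in $L^2(h^{\otimes m})$, and this pointwise bound discards that information.
\item Inserting $(\det h)^\eps$ only makes matters worse. Since $\det h$ is locally bounded below by a positive constant, $\int_V\rho_m^m(\det h)^\eps<\infty$ is a stronger requirement than $\int_V\rho_m^m<\infty$.
\item Your fallback gains nothing. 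By hypothesis $\mathcal E(h(\det h)^\eps)=\mathcal E(h)$, so an inclusion into the twisted sheaf is exactly the original claim. Replacing $h$ by $h(\det h)^\eps$ throughout changes $h_m$ and runs into the same Bergman-kernel integrability issue.
\end{enumerate}
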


\begin{remark}
It is well known that multiplier ideal sheaves enjoy several important properties, including coherence \cite{Nadel}, the Ohsawa--Takegoshi $L^2$ extension theorem \cite{OT87}, and the strong openness property \cite{GuanZhouSOC2013, GZ15invent}. For Nakano semi-positive singular Hermitian metrics on holomorphic vector bundles, Inayama \cite{Ina-AG} established coherence, while Liu--Xiao--Yang--Zhou \cite{LXYZ24} proved an \(L^2\) extension theorem and the strong openness property for multiplier submodule sheaves. However, the analogous questions for multiplier submodule sheaves associated with Griffiths semi-positive singular Hermitian metrics remain largely open  (\cite[Conjecture 1.1]{Inayama22} and \cite[Question 1.2]{LX26}). Perhaps Griffiths semi-positivity alone is not sufficient; we suspect that the multiple coarse \(L^2\)-extension property may be sufficient.
\end{remark}

We now consider the special case where \(h\) is a Griffiths semi-positive Hermitian metric on a trivial holomorphic vector bundle \(E\) over a bounded pseudoconvex domain \(D\subset\CC^n\). Repeating the construction of the tensor-power case with \((E^{\otimes m},h^{\otimes m})\) replaced by \((\mathrm{Sym}^mE\otimes\det E,\mathrm{Sym}^mh\otimes\det h)\), we obtain the following Demailly-type approximation.

\begin{thm}\label{thm:second-main-hermitian}
Let \(h\) be a Griffiths semi-positive Hermitian metric on a trivial holomorphic vector bundle \(E\) over a bounded pseudoconvex domain \(D\subset\CC^n\). For each \(m\ge1\), let \(\{\sigma_j^{(m)}\}_{j\ge1}\) be an orthonormal Hilbert basis of
\(
H^2\bigl(D,\mathrm{Sym}^mE\otimes\det E,\ \mathrm{Sym}^mh\otimes\det h\bigr).
\)
Define a Finsler metric \(h_m^*\) on \(E^*\) by
\begin{equation}\label{eq:h_m-sym-hermitian}
|u|_{h_m^*}:=
\left(
\sum_{j\ge1}
\left|\langle u^m,\sigma_j^{(m)}(x)\rangle\right|^2
\right)^{1/(2m)},
\qquad u\in E_x^*.
\end{equation}
Then:
\begin{enumerate}
\item[(i)] \(h_m^*\) is a well-defined singular Finsler metric on \(E^*\), Griffiths semi-negative, and smooth outside an analytic subset of \(D\).
\item[(ii)] For every local holomorphic section \(u\) of \(E^*\), every \(x\in D\), and every sufficiently small \(r>0\),
\[
e^{-C/m}|u|_{h^*}(\det h(x))^{-1/(2m)}
\le
|u|_{h_m^*}
\le
C_r^{1/m}\sup_{B(x,r)}|u|_{h^*}(\det h(x))^{-1/(2m)},
\]
where \(C_r=1/\operatorname{Vol}(B(x,r))\) with respect to a fixed Lebesgue measure in a local coordinate chart.
\item[(iii)] \(|u|_{h_m^*}\) converges to \(|u|_{h^*}\) pointwise and in \(L^1_{\mathrm{loc}}\) as \(m\to\infty\).
\end{enumerate}
\end{thm}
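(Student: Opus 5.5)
The plan is to run the same Bergman-kernel argument as in Theorem~\ref{thm:abstract}. The only new input is that the extension property is now supplied by an Ohsawa--Takegoshi theorem for \(\mathrm{Sym}^mE\otimes\det E\), with a constant independent of \(m\).

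First, Griffiths semi-positivity of \(h\) gives positivity of the twisted bundle. For a (possibly singular) Griffiths semi-positive \(h\), the induced metric \(\mathrm{Sym}^mh\otimes\det h\) on \(\mathrm{Sym}^mE\otimes\det E\) is Nakano semi-positive. This is the Demailly--Skoda-type statement \(E\otimes\det E\ge_{Nak}0\), extended to symmetric powers as in the Liu--Sun--Yang / Raufi framework for singular metrics. In the singular case I would obtain it by approximating \(h\) locally by smooth Griffiths semi-positive metrics via convolution (\cite{Rau15}, \cite{DNWZ23}) and passing to limits. Then the \(L^2\) extension theorem for Nakano semi-positive singular metrics \cite{LXYZ24}, extending from a point \(x\) on the bounded pseudoconvex domain \(D\), yields the following: for every \(a\in E_x\) there is a section \(s\in H^2(D,\mathrm{Sym}^mE\otimes\det E)\) with \(s(x)=a^m\otimes e\) and
\[
\|s\|^2\le C\,|a^m|^2_{\mathrm{Sym}^mh}\det h(x)=C\,|a|_h^{2m}\det h(x),
\]
where \(C\) depends only on \(\operatorname{diam}D\) and \(n\). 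The key point is that \(C\) does not depend on \(m\).

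Second, I would describe \(h_m^*\) through the Bergman space. We have \(|u|_{h_m^*}^{2m}=\sup_{\|s\|\le1}|\langle u^m,s(x)\rangle|^2\). This is a sum of squared moduli of holomorphic functions of \((x,u)\), so it is locally uniformly convergent by the mean-value bound below. Hence \(\log|u|_{h_m^*}\) is plurisubharmonic for holomorphic \(u\), which gives Griffiths semi-negativity. Smoothness outside the analytic set where the finitely many (by Noetherianity) sections spanning the base locus all vanish is the standard Demailly argument.

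For the lower bound in (ii), I would pair \(u^m\) with the extension \(s\) of \(a^m\). Choosing \(a\) with \(|\langle u,a\rangle|=|u|_{h^*}|a|_h\) (the supremum is attained in finite dimension) gives
\[
|u|_{h_m^*}^{2m}\ge |u|_{h^*}^{2m}/(C\det h(x)),
\]
and taking \(2m\)-th roots produces \(e^{-C'/m}\). For the upper bound, fix \(s\) with \(\|s\|\le1\). I would apply the sub-mean value property to the holomorphic function \(\langle u^m,s\rangle\) on \(B(x,r)\), then use \(|\langle u^m,s\rangle|\le |u|^m_{h^*}|s|_{\mathrm{Sym}^mh}\), which holds because the dual of the symmetric power norm is dominated by \(|u|^m_{h^*}\). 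Cauchy--Schwarz then gives
\[
|\langle u^m,s(x)\rangle|^2\le C_r\sup_{B(x,r)}|u|_{h^*}^{2m}\int_{B}|s|^2_{\mathrm{Sym}^mh}.
\]
The factor \((\det h)^{-1}\) comes from comparing \(\int|s|^2\) with the weighted norm. Here I must freeze \(\det h\) at \(x\), using that \(\det h\) is bounded below near \(x\) because \(\log\det h\) is psh-related and upper semicontinuous. I expect this step to need care, and possibly an adjustment of the sup.

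Finally, (iii) follows from (ii). The factors \(e^{-C/m}\), \(C_r^{1/m}\) and \((\det h(x))^{-1/(2m)}\) tend to \(1\) wherever \(\det h(x)\in(0,\infty)\), which holds almost everywhere. Letting \(r\to0\) and using upper semicontinuity of \(|u|_{h^*}\) gives pointwise convergence almost everywhere, and everywhere by the psh limits argument. Domination by the locally bounded \(\sup|u|_{h^*}\) gives \(L^1_{\mathrm{loc}}\) convergence.

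The main obstacle is the first step: getting Nakano positivity of \(\mathrm{Sym}^mh\otimes\det h\) for singular \(h\), together with an \(m\)-uniform Ohsawa--Takegoshi constant.
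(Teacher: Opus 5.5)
Your plan follows the paper's proof. The lower bound is obtained exactly as there: you pair \(u^m\) with an extension \(s\) of \(a^m\) satisfying \(\|s\|^2\le C|a|_{h(x)}^{2m}\det h(x)\), with \(C\) independent of \(m\). The paper does not re-derive the positivity of \(\mathrm{Sym}^mh\otimes\det h\) that you flag as the main obstacle; it takes the extension directly from \cite[Theorem 1.3 and Proposition 6.2]{LXYZ24}. The upper bound is likewise the mean-value inequality combined with duality.

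Two points should be corrected.

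First, freezing \(\det h\) at \(x\) (a step the paper also passes over) does not follow from local lower boundedness of \(\det h\). Your argument gives \(|u|_{h_m^*}(x)\le C_r^{1/(2m)}\sup_{B(x,r)}|u|_{h^*}\,\bigl(\sup_{B(x,r)}\det h^*\bigr)^{1/(2m)}\), where \(\det h^*=(\det h)^{-1}\). What is needed is upper semicontinuity of \(\det h^*\) at \(x\), which holds because \(\log\det h^*\) is plurisubharmonic for the Griffiths semi-negative metric \(h^*\). If \(\det h(x)<\infty\), this gives \(\sup_{B(x,r)}\det h^*\le 2\det h^*(x)\le C_r\det h^*(x)\) for all sufficiently small \(r\), independently of \(m\). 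The extra factor is then absorbed by the gap between \(C_r^{1/(2m)}\) and \(C_r^{1/m}\).

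Second, at points with \(\det h(x)=\infty\) the lower bound in (ii) is vacuous, and convergence can genuinely fail there. So no ``psh limits argument'' yields convergence everywhere. Take \(h=\operatorname{diag}(|z|^{-2n},1)\) on \(D\times\CC^2\) with \(0\in D\subset\CC^n\); it is Griffiths semi-positive, since \(|z|^{2n}|u_1|^2+|u_2|^2\) is psh. The coefficient of \(e_2^m\otimes(e_1\wedge e_2)\) in any \(L^2\) section is square-integrable against \(|z|^{-2n}\), hence vanishes at \(0\). Therefore \(|e_2^*|_{h_m^*}(0)=0\) for every \(m\), while \(|e_2^*|_{h^*}(0)=1\). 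For singular \(h\), (iii) holds pointwise only on \(\{\det h<\infty\}\), i.e.\ almost everywhere. Together with your domination argument, that is enough for \(L^1_{\mathrm{loc}}\) convergence.
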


\section{Proof of main results} 
\subsection{Proof of Theorem \ref{thm:abstract}} 
 First, we show that the supremum in \eqref{eq:h_m-tensor} is finite and that \(h_m^*\) is Griffiths semi-negative.
Fix a local holomorphic section \(u\) of \(E^*\), a point \(x\in X\), and \(r>0\) such that \(B(x,r)\Subset X\). For any \(s\in H^p(X,E^{\otimes m},h^{\otimes m})\) with \(\|s\|_{L^p(h^{\otimes m})}\le1\), the function \(|\langle u^{\otimes m},s\rangle|^p\) is plurisubharmonic. By the mean value inequality and the duality inequality \(|\langle u^{\otimes m},s\rangle|\le|u|_{h^*}^m|s|_{h^{\otimes m}}\), we obtain
\begin{align*}
|\langle u^{\otimes m},s\rangle|^p(x)
&\le
C_r\int_{B(x,r)}|\langle u^{\otimes m},s\rangle|^p\,dV \\
&\le
C_r\sup_{B(x,r)}|u|_{h^*}^{pm}\int_{B(x,r)}|s|^p_{h^{\otimes m}}
\le
C_r\sup_{B(x,r)}|u|_{h^*}^{pm},
\end{align*}
where \(C_r=\frac{1}{\operatorname{Vol}(B(x,r))}\), and the last inequality uses \(\|s\|_{L^p(h^{\otimes m})}\le1\). Since \(|u|_{h^*}\) is upper semi-continuous, the supremum in \eqref{eq:h_m-tensor} is finite and the upper semi-continuous regularization is well-defined. In addition, we have
\[
|u|_{h_m^*}\le C_r^{1/(pm)}\sup_{B(x,r)}|u|_{h^*}.
\]
Moreover, since \(|\langle u^{\otimes m},s\rangle|^{1/m}\) is plurisubharmonic, we obtain that \(|u|_{h_m^*}\) is plurisubharmonic, and \(h_m^*\) is Griffiths semi-negative.

It remains to prove the lower bound. Fix \(x\in X\) and \(u\in E_x^*\setminus\{0\}\). By the definition of the dual Finsler norm, for every \(\eps>0\) there exists \(a_\eps\in E_x\setminus\{0\}\) such that
\[
\langle u,a_\eps\rangle=1,\qquad |a_\eps|_{h(x)}\le(1+\eps)\,|u|_{h^*(x)}^{-1}.
\]
By the multiple coarse \(L^p\) extension property, there exists
\[
s_\eps\in H^p(X,E^{\otimes m},h^{\otimes m})
\]
such that \(s_\eps(x)=a_\eps^{\otimes m}\) and
\[
\|s_\eps\|_{L^p(h^{\otimes m})}^p
\le C_m|a_\eps|_{h(x)}^{pm}
\le C_m(1+\eps)^{pm}|u|_{h^*(x)}^{-pm}.
\]
Then
\[
|u|_{h_m^*}^m
\ge
\frac{|\langle u^{\otimes m},s_\eps(x)\rangle|}{\|s_\eps\|_{L^p(h^{\otimes m})}}
=
\frac{1}{\|s_\eps\|_{L^p(h^{\otimes m})}}
\ge
C_m^{-1/p}(1+\eps)^{-m}|u|_{h^*(x)}^{m}.
\]
Letting \(\eps\to0\), we obtain
\[
|u|_{h_m^*}\ge C_m^{-1/(pm)}|u|_{h^*}.
\]
Since \(\lim_{m\to\infty} C_m^{-1/m}=1\) and \(|u|_{h^*}\) is upper semi-continuous, we obtain that \(|u|_{h_m^*}\) converges to \(|u|_{h^*}\) pointwise and in \(L^1_{\mathrm{loc}}\) topology as \(m\to\infty\). We complete the proof.

\begin{remark}
If one replaces \(E^{\otimes m}\) by \(\mathrm{Sym}^mE\) in the multiple coarse \(L^p\) extension property, the same arguments apply with obvious modifications. In fact, the proof does not use the finite rank of \(E\). It also works for Banach vector bundles, provided the tensor powers are completed with respect to a suitable tensor norm (for instance, the projective tensor norm) so that the duality inequality
\[
|\langle u^{\otimes m},s\rangle|\le |u|_{h^*}^m\,|s|_{h^{\otimes m}}
\]
remains valid.
\end{remark}

\subsection{Proof of Proposition \ref{prop:multiplier-sheaves-strong-openness}}

Let \(E\to X\) be a holomorphic vector bundle of rank \(r\) over a complex manifold \(X\), endowed with a singular Finsler metric \(h\) such that \(|u|_{h^*}\) is upper semi-continuous for every local holomorphic section \(u\) of \(E^*\).   Assume that \((E,h)\) satisfies the multiple coarse\(L^2\) extensionproperty.  By Remark~\ref{rem:sm-version}-(i), the Finsler metric \(h_m^*\) of Theorem~\ref{thm:abstract} admits the following equivalent explicit form: for every \(u\in E_x^*\),
\begin{equation}\label{eq:h_m-infinite-sum}
|u|^2_{h_m^*}=
\left(
\sum_{j\ge1}
|\langle u^{\otimes m},\sigma_j^{(m)}(x)\rangle|^2
\right)^{1/m},
\end{equation}
where \(\{\sigma_j^{(m)}\}_{j\ge1}\) is an orthonormal Hilbert basis of \(H^2(X,E^{\otimes m},h^{\otimes m})\). In particular, \(h_m^*\) is Griffiths semi-negative, smooth outside an analytic subset of \(X\), and satisfies \[
C_m^{-1/m}|u|^2_{h^*}\le |u|^2_{h_m^*}\le C_r^{1/m}\sup_{B(x,r)}|u|^2_{h^*}.
\] 

 \begin{lemma}\label{lem:finite-control}
Let \(E\to X\) be a holomorphic vector bundle over a compact complex manifold \(X\), endowed with a singular Hermitian metric \(h\) such that \(h\)  is locally bounded from below by a positive constant. Let \(\{\sigma_j\}_{j\ge1}\) be any orthonormal Hilbert basis of \(H^2(X,E,h)\). For every relatively compact subset \(K\Subset X\), there exist \(N_K\ge1\) and \(C_K>0\) such that
\[
\sum_{j=1}^\infty |\sigma_j|_h^2
\le
C_K\sum_{j=1}^{N_K} |\sigma_j|_h^2
\qquad\text{on }K.
\]
\end{lemma}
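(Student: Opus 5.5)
The plan is to use the compactness of \(X\) directly. The fact that \(h\) is bounded below should only be needed to make \(H^2(X,E,h)\) a closed subspace of holomorphic sections on which point evaluations are continuous.

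First, \(H^2(X,E,h)\) is a linear subspace of \(H^0(X,E)\), the space of global holomorphic sections of \(E\). Since \(X\) is compact, the Cartan--Serre finiteness theorem gives \(\dim H^0(X,E)<+\infty\). Hence \(H^2(X,E,h)\) is finite-dimensional, say of dimension \(N\). Consequently every orthonormal Hilbert basis \(\{\sigma_j\}\) is in fact finite, with \(\sigma_j\) defined only for \(j\le N\). If the statement is read with an infinite index set, I would interpret \(\sigma_j=0\) for \(j>N\).

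Then the inequality holds trivially on all of \(X\), and in particular on \(K\), with \(N_K=N\) and \(C_K=1\):
\[
\sum_{j\ge1}|\sigma_j|_h^2=\sum_{j=1}^{N}|\sigma_j|_h^2 .
\]

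I do not expect a real obstacle in the compact case. The one point to check is that the sum in the statement is taken over a genuine basis of \(H^2(X,E,h)\), so no hidden infinite-dimensional completion enters.

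If the lemma is meant for noncompact \(X\) (the phrase ``\(K\Subset X\)'' suggests this), I would argue instead with the endomorphism-valued Bergman kernel \(B(x)=\sum_j\sigma_j(x)\otimes\sigma_j(x)^{*}\). Here the lower bound \(h\ge c>0\) gives, via the mean value inequality, local uniform convergence of \(B_N=\sum_{j\le N}\sigma_j\sigma_j^*\) to \(B\). Moreover, \(\sum_j|\sigma_j|_h^2=\operatorname{tr}(hB)\).

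The key step would then be a uniform matrix inequality \(B\le C_K B_{N_K}\) on \(K\). Pointwise, this holds as soon as \(B_N(x)\) has the same range as \(B(x)\), since the ranges increase and stabilize in a finite-dimensional fiber. The main obstacle is uniformity near points where \(\operatorname{rank}B\) drops. There I would use the analytic stratification by rank together with compactness of \(\overline K\), and bound ratios of minors, exactly as in the proof that generators of the coherent sheaf generated by \(H^2\) can be chosen finitely on compacts.
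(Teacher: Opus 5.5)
Your argument is correct for the statement as written, but it takes a different route from the paper. Compactness of \(X\) makes \(H^0(X,E)\), and hence \(H^2(X,E,h)\), finite-dimensional. So every orthonormal basis is finite, and \(N_K=\dim H^2(X,E,h)\), \(C_K=1\) work. Your proof uses compactness and never the lower bound on \(h\); the paper's proof is the reverse.

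The paper uses the lower bound only to make point evaluations continuous, and never uses compactness:
\begin{enumerate}
\item[(1)] It identifies \(\sum_k|\sigma_k^j(z)|^2\) with the squared norm of the evaluation functional.
\item[(2)] It polarizes to the holomorphic kernel \(S(z,w)=\sum_k\bigl(\sigma_k^i(z)\overline{\sigma_k^j(\bar w)}\bigr)_{i,j}\).
\item[(3)] By the strong Noetherian property, the coherent sheaves \(\mathcal E_N\) generated by the first \(N\) rank-one terms become stationary near \(K\times K\).
\item[(4)] Using, implicitly, that sections of a coherent subsheaf are closed under locally uniform limits, it writes \(S\) as a holomorphic combination of the first \(N_K\) terms.
\item[(5)] It sets \(w=\bar z\) and contracts with \(h\).
\end{enumerate}

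This machinery is what the paper actually needs. The lemma is applied in Proposition~\ref{prop:min-integral} to \(E^{\otimes m}\) over a bounded domain \(D\), which is not compact and where \(H^2\) is infinite-dimensional. The word ``compact'' in the statement is therefore evidently a slip, and your main argument, though valid for the literal statement, does not serve the intended use.

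Your fallback sketch for noncompact \(X\) does not close this gap.
\begin{enumerate}
\item[(a)] Pointwise stabilization of the range of \(B_N(x)\) gives no uniform constant.
\item[(b)] A rank stratification with ratios of minors is circular exactly where it matters. For \(r=1\), when all \(L^2\) sections share a zero, the ratio to bound is \(B/B_N\) itself.
\item[(c)] Finitely many local generators of the sheaf generated by \(H^2\) are not enough. You must also control the coefficients expressing \(\sigma_j\) through \(\sigma_1,\dots,\sigma_N\), square-summably in \(j\) and uniformly on \(K\).
\end{enumerate}
The paper gets this control by treating the whole kernel \(S\) as a single section of the stabilized sheaf. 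An alternative would be a division theorem with bounds combined with \(\sum_j\sup_K|\sigma_j|^2<+\infty\); the latter follows from the mean value inequality and local boundedness of the Bergman kernel.
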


\begin{proof}
Without loss of generality, we may assume that \(K\Subset\CC^n\), \(K=\{\bar z:z\in K\}\) and that \(E\) is trivial on \(K\). 
Let \(\{\sigma_j\}_{j\ge1}\) be an orthonormal Hilbert basis of \(H^2(X,E,h)\). For each \(z\in X\) and each \(1\le j\le r\), let
\[
\mathrm{Eva}^j_z: H^2(X,E,h)\longrightarrow\CC,
\qquad
\mathrm{Eva}^j_z(f):=f^j(z),
\]
be the evaluation map. Since \(h\) has positive lower bound, we obtain that \(\mathrm{Eva}^j_z\) is a bounded linear functional on \( H^2(X,E,h)\) for every \(z\in X\).

By the Riesz representation theorem, there exists an element \(K_j(\cdot,z)\in H^2(X,E,h)\) such that
\[
\mathrm{Eva}^j_z(f)=\langle\!\langle f(\cdot),K_j(\cdot,z)\rangle\!\rangle_h
\]
for every \(f\in H^2(X,E,h)\), where \(\langle\!\langle\cdot,\cdot\rangle\!\rangle_h\) denotes the inner product of \( H^2(X,E,h)\). In particular,
\[
K_i^j(z,w)=\mathrm{Eva}^j_z(K_i(\cdot,w))=\langle\!\langle K_i(\cdot,w),K_j(\cdot,z)\rangle\!\rangle_h,
\]
and
\[
\sigma_k^j(z)=\mathrm{Eva}^j_z(\sigma_k)=\langle\!\langle \sigma_k(\cdot),K_j(\cdot,z)\rangle\!\rangle_h.
\]
Using the expansion
\[
K_j(z,w)=\sum_{k=1}^\infty \langle\!\langle K_j(\cdot,w),\sigma_k(\cdot)\rangle\!\rangle_h\,\sigma_k(z)=\sum_{k=1}^\infty \overline{\sigma_k^j(w)}\,\sigma_k(z),
\]
and taking \(w=z\), we obtain
\[
\sum_{k=1}^\infty |\sigma_k^j(z)|^2=K_j^j(z,z)=\|K_j(\cdot,z)\|_h^2.
\]
On the other hand,
\[
\|K_j(\cdot,z)\|_h=\|\mathrm{Eva}^j_z\|=\sup\{|f^j(z)|:f\in H^2(X,E,h),\ \|f\|_h\le1\}
\]
is bounded on any compact subset of \(X\). Therefore \(\sum_{k=1}^\infty|\sigma_k(z)|_h^2\) is uniformly convergent on any compact subset of \(X\).

Now consider, on \(X\times X\), the subsheaves
\[
\mathcal E_N\subset(\mathcal O_{X\times X})^{r^2},
\qquad
\mathcal E_N:=\mathrm{Gen}\left\{\bigl(\sigma_k^i(z)\overline{\sigma_k^j(\bar w)}\bigr)_{1\le i,j\le r}:1\le k\le N\right\}.
\]
The family \(\{\mathcal E_N\}_{N\ge1}\) is an increasing sequence of coherent subsheaves of the coherent sheaf \((\mathcal O_{X\times X})^{r^2}\). By the strong Noetherian property of coherent analytic sheaves, this sequence is locally stationary. Hence, on \(K\times K\), there exists \(N_K\ge1\) such that
\[
\mathcal E_{\infty}:=\mathrm{Gen}\left\{\bigl(\sigma_k^i(z)\overline{\sigma_k^j(\bar w)}\bigr)_{1\le i,j\le r}:k\ge1\right\}
=\mathcal E_{N_K}.
\]
By the Cauchy--Schwarz inequality,
\[
\Bigl|\sum_{k=m_1}^{m_2}\sigma_k^i(z)\overline{\sigma_k^j(\bar w)}\Bigr|^2
\le
\sum_{k=m_1}^{m_2}|\sigma_k^i(z)|^2
\sum_{k=m_1}^{m_2}|\sigma_k^j(\bar w)|^2
\]
for all \(m_2\ge m_1\), so the matrix-valued series
\[
S(z,w):=\sum_{k=1}^\infty \bigl(\sigma_k^i(z)\overline{\sigma_k^j(\bar w)}\bigr)_{1\le i,j\le r}
\]
converges uniformly on \(K\times K\) and hence a holomorphic section of $\mathcal E_{\infty}$. Thus there exist holomorphic functions \(a_k\in\mathcal O(K\times K)\), \(1\le k\le N_K\), such that
\[
S(z,w)
=
\sum_{k=1}^{N_K} a_k(z,w)\,
\bigl(\sigma_k^i(z)\overline{\sigma_k^j(\bar w)}\bigr)_{1\le i,j\le r}
\qquad\text{on }K\times K.
\]
Taking \(w=\bar z\) and contracting with the metric \(h\), we obtain, on \(K\),
\[
\sum_{k=1}^\infty|\sigma_k(z)|_h^2
=
\sum_{k=1}^{N_K} a_k(z,\bar z)\,|\sigma_k(z)|_h^2
\le
C_K\sum_{k=1}^{N_K}|\sigma_k(z)|_h^2,
\]
where \(C_K:=\sup_{z\in K}\sum_{k=1}^{N_K}|a_k(z,\bar z)|\) (shrinking \(K\) if necessary). This completes the proof.
\end{proof}
 
\begin{prop}\label{prop:min-integral}
Let \(D\subset\CC^n\) be a bounded domain, \(E\to D\) a trivial holomorphic vector bundle of rank \(r\), and \(h\) a singular Hermitian metric on \(E\) such that \(|u|_{h^*}\) is upper semi-continuous for every local holomorphic section \(u\) of \(E^*\). Assume that \((E,h)\) satisfies the multiple coarse\(L^2\) extension property.  
Then for  any \(s\in H^0(D,E)\), any \(m\ge2\) and any relatively compact subset \(K\Subset D\),,
\[
\int_K\left(|s|_h^2-\min\{|s|_{h}^2,|s|_{h_m}^{2m/(m-1)}\}\right)dV<+\infty,
\]
where \(h_m^*\) is the approximating Finsler metric on \(E^*\) given by
\[
|u|^2_{h_m^*}:=
\left(
\sum_{j\ge1}|\langle u^{\otimes m},\sigma_j^{(m)}(x)\rangle|^2
\right)^{1/m},
\qquad u\in E_x^*,
\]
with \(\{\sigma_j^{(m)}\}_{j\ge1}\) an orthonormal Hilbert basis of \(H^2(D,E^{\otimes m},h^{\otimes m})\).
\end{prop}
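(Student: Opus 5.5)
The plan is a pointwise comparison: on the set where the minimum is strictly smaller than \(|s|_h^2\), I will bound \(|s|_h^2\) by the Bergman-type density \(S_m:=\sum_{j\ge1}|\sigma_j^{(m)}|^2_{h^{\otimes m}}\). I will then control \(\int_K S_m\,dV\) using Lemma~\ref{lem:finite-control}. Outside that set the integrand \(|s|_h^2-\min\{|s|_h^2,|s|_{h_m}^{2m/(m-1)}\}\) vanishes. Inside it the integrand is at most \(|s|_h^2\). So it suffices to show
\[
A:=\{x\in K:\ |s|_{h_m}^{2m/(m-1)}(x)<|s|_h^2(x)\}\subset\{x\in K:\ |s|_h^2(x)\le S_m(x)\}
\]
up to a null set, and that \(S_m\in L^1(K)\).

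\textbf{Step 1 (pointwise comparison).} Fix \(x\in A\) with \(s(x)\neq0\) and \(|s|_h(x)<\infty\); the set where these fail is negligible or irrelevant. Since \(h(x)\) is Hermitian, I pick \(u\in E_x^*\) with \(\langle u,s(x)\rangle=1\) and \(|u|_{h^*}|s|_h=1\), for instance the \(h\)-dual of \(s(x)/|s|_h^2\). By duality of Finsler norms, \(|s|_{h_m}\ge|\langle u,s\rangle|/|u|_{h_m^*}\). By the explicit formula for \(h_m^*\) and the duality inequality \(|\langle u^{\otimes m},\sigma\rangle|\le|u|^m_{h^*}|\sigma|_{h^{\otimes m}}\), we get \(|u|_{h_m^*}^{2m}\le |u|_{h^*}^{2m}S_m(x)\). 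Combining these,
\[
|s|_{h_m}^{2m/(m-1)}\ \ge\ |u|_{h_m^*}^{-2m/(m-1)}\ \ge\ |u|_{h^*}^{-2m/(m-1)}S_m^{-1/(m-1)}=|s|_h^{2m/(m-1)}S_m^{-1/(m-1)}.
\]
On \(A\) the left side is \(<|s|_h^2\). Raising to the power \(m-1\) gives \(|s|_h^{2m}<|s|_h^{2(m-1)}S_m\), that is, \(|s|_h^2<S_m\). Hence the integrand is bounded by \(S_m\) on all of \(K\). This is the vector-bundle analogue of the line-bundle computation \(|s|^2e^{-\varphi}<\bigl(\sum|\sigma_j|^2\bigr)e^{-m\varphi}\).

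\textbf{Step 2 (integrability of \(S_m\)).} The obstacle is that \(S_m\) is an infinite sum: each term has integral at most \(1\), but the sum need not be integrable a priori. Here I will invoke Lemma~\ref{lem:finite-control} for the bundle \((E^{\otimes m},h^{\otimes m})\) over \(D\). Its hypothesis that the metric is locally bounded below holds because \(|u|_{h^*}\) is upper semi-continuous, so \(h^*\) is locally bounded above and thus \(h\), and hence \(h^{\otimes m}\), is locally bounded below. The compactness assumption in the lemma is only used locally (through evaluation maps and the Noetherian property on \(K\times K\)), so I will apply its argument on a neighbourhood of \(K\), covering \(K\) by finitely many pieces if necessary. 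This gives \(S_m\le C_K\sum_{j\le N_K}|\sigma_j^{(m)}|^2_{h^{\otimes m}}\) on \(K\), so
\[
\int_K S_m\,dV\le C_K\sum_{j=1}^{N_K}\|\sigma_j^{(m)}\|^2_{L^2(h^{\otimes m})}=C_KN_K<+\infty.
\]

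Together with Step 1 this yields the claimed finiteness. The main point of care is the justification of Step 2, namely transplanting the finite-control lemma from the compact setting to the relatively compact \(K\Subset D\). Step 1 is purely algebraic once the dual vector \(u\) is chosen.
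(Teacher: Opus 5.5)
Your proposal is correct and follows essentially the same route as the paper. You choose a dual vector with $\langle u,s(x)\rangle=1$ and $|u|_{h^*}|s|_h=1$, which gives $|s|_h^2\le\sum_j|\sigma_j^{(m)}|^2_{h^{\otimes m}}$ wherever the minimum is not $|s|_h^2$; Lemma~\ref{lem:finite-control} applied to $(E^{\otimes m},h^{\otimes m})$ then makes this sum integrable on $K$. Your extra checks fill in points the paper uses only implicitly: upper semicontinuity of $|u|_{h^*}$ yields the local lower bound on $h^{\otimes m}$ that the lemma needs, and the lemma's argument is local, so it applies to $K\Subset D$ even though it is stated for compact $X$.
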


\begin{proof}
Define the measurable set
\[
A_m:=\left\{x\in K:\ |s|_h^2\ge \bigl(|s|_{h_m}^2\bigr)^{\frac{m}{m-1}}\right\}.
\]
It suffices to prove
\[
\int_{A_m}|s|_h^2\,dV<+\infty.
\]
We first derive a pointwise inequality on \(A_m\). Fix \(x\in A_m\) with \(0<|s|_h(x)<+\infty\), there exists \(\zeta_x\in E_x^*\setminus\{0\}\) such that
\[
|\zeta_x|_{h^*}|s|_h(x)=|\langle \zeta_x,s\rangle|=1.
\]
In addition, by definition, we have
\(|s|_{h_m}^2\ge |\zeta_x|_{h_m^*}^{-2}\).
Then 
\(|\zeta_x|_{h^*}^{2m-2}\le |\zeta_x|_{h_m^*}^{2m}\).
Notice that
\[
|\zeta_x|_{h_m^*}^{2m}
=
\sum_{j=1}^\infty |\langle \zeta_x^{\otimes m},\sigma_j^{(m)}(x)\rangle|^2\le\sum_{j=1}^\infty|\zeta_x|_{h^*}^{2m}\,|\sigma_j^{(m)}(x)|_{h^{\otimes m}}^2,
\]
 then we obtain
\[
|s|_h^2(x)=|\zeta_x|_{h^*}^{-2}
\le
\sum_{j=1}^\infty |\sigma_j^{(m)}(x)|_{h^{\otimes m}}^2.
\]

Now apply Lemma~\ref{lem:finite-control} to the bundle \(E^{\otimes m}\) with the Hermitian metric \(h^{\otimes m}\). For every relatively compact subset \(K\Subset D\), there exist \(N_K\ge1\) and \(C_K>0\) such that
\[
\sum_{j=1}^\infty |\sigma_j^{(m)}(x)|_{h^{\otimes m}}^2
\le
C_K\sum_{j=1}^{N_K} |\sigma_j^{(m)}(x)|_{h^{\otimes m}}^2,
\qquad x\in K.
\]
Since \(0<|s|_h(x)<+\infty\) for almost every \(x\in D\),
 we obtain
\[
\int_{A_m}|s|_h^2\,dV
\le
C_K\sum_{j=1}^{N_K}\int_K |\sigma_j^{(m)}|_{h^{\otimes m}}^2\,dV
\le
C_K\sum_{j=1}^{N_K}\|\sigma_j^{(m)}\|_{h^{\otimes m}}^2<+\infty.
\]
We complete the proof.
\end{proof}

\begin{prop}[=Proposition \ref{prop:multiplier-sheaves-strong-openness}]
Let \(D\subset\CC^n\) be a bounded domain, \(E\to D\) a trivial holomorphic vector bundle of rank \(r\), and \(h\) a Hermitian metric on \(E\) such that \(|u|_{h^*}\) is upper semi-continuous for every local holomorphic section \(u\) of \(E^*\). Assume that \((E,h)\) satisfies the multiple coarse \(L^2\) extension property and the strong openness property: for every relatively compact open subset \(U\Subset D\) over which \(E\) is trivial, there exists \(\eps>0\) such that
\[
\mathcal E(h)_x=\mathcal E(h(\det h)^\eps)_x
\]
for all \(x\in U\). Then \(
\mathcal E(h)=\mathcal E\!\left(h_m^{m/(m-1)}\right)
\) for $m$ large enough.
\end{prop}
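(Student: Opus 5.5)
We prove the two inclusions \(\mathcal E(h)\subset\mathcal E(h_m^{m/(m-1)})\) and \(\mathcal E(h_m^{m/(m-1)})\subset\mathcal E(h)\) at a fixed point \(x\), working over a small neighborhood \(U\Subset D\) on which the strong openness exponent \(\eps>0\) is fixed. Since \(D\) is bounded and \(E\) is trivial, any germ \(s\in\mathcal O(E)_x\) may be taken to be a holomorphic section on a neighborhood \(V\ni x\). Proposition~\ref{prop:min-integral} is stated for global sections, but its proof only uses local quantities on \(K\). So we expect to apply it with \(K\) a small ball around \(x\), either after replacing \(D\) by a small ball \(B\) and using the restricted metric, or by approximating the germ \(s\) by global sections in the relevant finite-dimensional quotient.

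\emph{First inclusion: \(\mathcal E(h_m^{m/(m-1)})_x\subset\mathcal E(h)_x\).} This should hold for every \(m\ge2\). On \(K\) we have
\[
|s|_h^2\le \min\{|s|_h^2,|s|_{h_m}^{2m/(m-1)}\}+\bigl(|s|_h^2-\min\{|s|_h^2,|s|_{h_m}^{2m/(m-1)}\}\bigr).
\]
The first term is bounded by \(|s|_{h_m^{m/(m-1)}}^2\), which is integrable by assumption. The second term is integrable by Proposition~\ref{prop:min-integral}. Hence \(s\in\mathcal E(h)_x\).

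\emph{Second inclusion: \(\mathcal E(h)_x\subset\mathcal E(h_m^{m/(m-1)})_x\) for large \(m\).} This is where strong openness is needed. From the lower bound \(|u|_{h_m^*}\ge C_m^{-1/(2m)}|u|_{h^*}\) in Theorem~\ref{thm:abstract} we only get \(|s|_{h_m}\le C_m^{1/(2m)}|s|_h\). Raising to the power \(m/(m-1)\) gives
\[
|s|_{h_m}^{2m/(m-1)}\lesssim |s|_h^{2m/(m-1)}=|s|_h^2\,|s|_h^{2/(m-1)}.
\]
The extra factor \(|s|_h^{2/(m-1)}\) may be unbounded, which is why strong openness enters. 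The plan is to dominate \(|s|_h^{2/(m-1)}\) by a multiple of \((\det h)^{\eps}\) plus a bounded term.

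To do this, let \(\lambda_{\max}\) be the largest eigenvalue of \(h\) relative to the flat metric. Then \(|s|_h^2\le \lambda_{\max}|s|^2\). By the Griffiths semi-positivity given by Corollary~\ref{cor:DWZZ}, the eigenvalues of \(h^*\) are locally bounded above, since \(|u|_{h^*}\) is psh and hence locally bounded. It follows that the smallest eigenvalue of \(h\) is bounded below by a constant \(c>0\). Therefore \(\lambda_{\max}\le c^{-(r-1)}\det h\). Choosing \(m\) with \(1/(m-1)\le\eps\), we get on the set where \(|s|_h\ge1\)
\[
|s|_h^{2/(m-1)}\le |s|_h^{2\eps}\lesssim (\det h)^{\eps}.
\]
Consequently
\[
|s|_{h_m}^{2m/(m-1)}\lesssim |s|_h^2+|s|^2_{h(\det h)^\eps}.
\]
The right-hand side is integrable near \(x\) by the hypothesis \(\mathcal E(h)_x=\mathcal E(h(\det h)^\eps)_x\). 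Because \(\eps\) is uniform on \(U\) and \(D\) is covered by countably many such \(U\) with compact closure, we expect a single \(m\) to work on each \(U\). The statement "for \(m\) large enough" should then be read locally, or globally if \(\eps\) has a positive infimum.

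The main obstacle is the second inclusion. It requires checking that the lower eigenvalue bound of \(h\) really follows from upper semicontinuity of \(|u|_{h^*}\), which we use only on compacts. It also requires that the constants \(C_m^{1/(2m)}\) do not interfere, which is fine because \(m\) is fixed once chosen.
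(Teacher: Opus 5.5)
Your proof is correct.

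\textbf{Inclusion $\mathcal E(h_m^{m/(m-1)})\subset\mathcal E(h)$.} This is exactly the paper's argument. You split off $\min\{|s|_h^2,|s|_{h_m}^{2m/(m-1)}\}$ and apply Proposition~\ref{prop:min-integral}. Its proof is pointwise in $s$, so it applies to germs. Your alternative of replacing $D$ by a small ball would not be legitimate, since that changes the Bergman space defining $h_m$.

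\textbf{Inclusion $\mathcal E(h)\subset\mathcal E(h_m^{m/(m-1)})$.} Here you take a different route at one step.
\begin{enumerate}
\item[(i)] The paper cites \cite[Lemma 3.8]{LXYZ24} to turn $\mathcal E(h)=\mathcal E(h(\det h)^\eps)$ into $|s|_h^{2p}\in L^1_{\mathrm{loc}}$ for some $p>1$. It then takes $m/(m-1)<p$ and uses $|s|_{h_m}\le C_m^{1/(2m)}|s|_h$.
\item[(ii)] You instead prove the needed direction of that lemma by hand. The bound $\lambda_{\min}(h)\ge c>0$ already follows from upper semicontinuity of $|e_i^*|_{h^*}$, so Corollary~\ref{cor:DWZZ} is not needed. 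It gives $\lambda_{\max}(h)\le c^{1-r}\det h$, hence $|s|_h^{2+2\eps}\lesssim |s|_h^2(\det h)^\eps$ on $\{|s|_h\ge1\}$. So the paper's $p$ can be taken to be $1+\eps$.
\end{enumerate}

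\textbf{What each route buys.} Yours is self-contained and gives an explicit threshold $m\ge 1+1/\eps$ depending only on $U$. In the paper, $p$ (and hence $m$) a priori depends on the section $s$. The paper's route is shorter, but it leaves the uniformity implicit in ``for $m$ large enough''.
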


\begin{proof}
By \cite[Lemma 3.8]{LXYZ24}, the strong openness property is equivalent to the following: if \(|s|_h^2\) is integrable near a point \(x\), then there exists \(p>1\) such that \(|s|_h^{2p}\) is integrable near \(x\). Since \(\int_U|s|_h^2\,dV<+\infty\), there exists \(p>1\) such that
\[
\int_U |s|_h^{2p}\,dV<+\infty.
\]
Choose \(m\ge2\) such that \(m/(m-1)<p\), i.e., \(2m/(m-1)<2p\). By Theorem~\ref{thm:abstract}, we have the upper bound
$|s|_{h_m}\le C_r^{1/(2m)}|s|_h$ 
locally on relatively compact subsets.  Therefore
\[
\int_U |s|_{h_m}^{2m/(m-1)}\,dV<+\infty.
\]

Conversely, assume that for some \(m\ge2\),
\[
\int_U |s|_{h_m}^{2m/(m-1)}\,dV<+\infty.
\]
 Notice that
\begin{align*}
   & \int_U \left||s|_h^2-|s|_{h_m}^{2m/(m-1)}\right|\,dV   \\
  \le & \int_U \left(|s|_h^2-\min\{|s|_h^2,|s|_{h_m}^{2m/(m-1)}\}\right)dV
+
\int_U |s|_{h_m}^{2m/(m-1)}\,dV.
\end{align*}
Then by Proposition~\ref{prop:min-integral}, we get  that 
 \(\int_D |s|_h^2\,dV<+\infty\).  We complete the proof.
 \end{proof}

\begin{remark}\label{rem:multiplier-sheaves-reverse}
It should be noted that the object \(h_m^{m/(m-1)}\) is not a Finsler metric. Indeed, a Finsler metric is required to be homogeneous of degree one, whereas \(h_m^{m/(m-1)}\) is homogeneous of degree \(m/(m-1)\), and \(m/(m-1)\neq 1\) for \(m\ge2\). To obtain a genuine Finsler metric, fix a smooth Hermitian metric \(g\) on \(E\) and define
\[
\tilde h_m:=h_m^{m/(m-1)}\,g^{-1/(m-1)}.
\]
Then \(\tilde h_m\) is a Finsler metric, and  
\(
\mathcal E(\tilde h_m)=\mathcal E\!\left(h_m^{m/(m-1)}\right).
\)
\end{remark}
 
\subsection{Proof of Theorem \ref{thm:second-main-hermitian}}
Let  \(D\subset\CC^n\)  be a bounded pseudoconvex domain, $E=D\times\CC^r$ be a  trivial holomorphic vector bundle and \(h\)  a Griffiths semi-positive singular Hermitian metric on  \(E\). Fix a global trivialization of \(E\), so that \(\det E\) is trivialized and \(\det h\) is a positive function on \(D\). 
Fix a local holomorphic section \(u\) of \(E^*\), a point \(x\in D\) with $\det h(x)<+\infty$, and \(r>0\) such that \(B(x,r)\Subset D\). Notice that $$|u|_{h_m^*}=\left( \sup_{\|s\|_{L^2(g_m)}\le1}|\langle u^{\otimes m},s(x)\rangle|  \right)^{1/m},$$ where \(s\in H^2(D,F_m,g_m)\) and \((F_m, g_m)=(\mathrm{Sym}^mE\otimes\det E, \mathrm{Sym}^mh\otimes\det h)\). By the mean value inequality,
\[
|\langle u^{\otimes m},s (x)\rangle|
\le
C_r\int_{B(x,r)}|\langle u^{\otimes m},s(y)\rangle| \,dV(y),
\]
where \(C_r=1/\operatorname{Vol}(B(x,r))\). By the duality inequality,
\[
|\langle u^{\otimes m},s \rangle| 
\le
|u^{\otimes m}|_{\mathrm{Sym}^mh\otimes \det h^*} |s |_{g_m}
=
|u|_{h^*}^m |s |_{g_m}(\det h)^{-1}.
\]
Therefore,
 $$
|\langle u^{\otimes m},s(x)\rangle| 
\le
C_r\sup_{B(x,r)}|u|_{h^*}^m (\det h)^{-1}$$
  and hence we obtain the upper bound.

For the lower bound, by the definition of the dual norm, there exists \(a\in E_x\) such that
\[
\langle u,a\rangle=1,\qquad |a|_{h(x)}=|u|_{h^*(x)}^{-1}.
\]
Since $h$ is Griffiths semi-positive,  by the \(L^2\) extension property from \cite[Theorem 1.3 and Proposition 6.2]{LXYZ24}, there exists \(s\in H^2(D,F_m,g_m)\) such that
\[
s(x)=a^{\otimes m}, \qquad
\|s\|_{g_m}^2
\le
C |a|_{h(x)}^{2m}\det h(x).
\]
Then \(\|s\|_{g_m}\le C^{1/2}|u|_{h^*}^{-m}\det h(x)^{1/2}\). Consider \(s'=s/\|s\|_{g_m}\). Then \(\|s'\|_{g_m}=1\), and
\[
|\langle u^m,s'(x)\rangle| 
\ge C^{-1/2}|u|_{h^*}^{m}\det h(x)^{-1/2}.
\]
Therefore, we obtain the lower bound.  We complete the proof.

\vskip1em

\textbf{Acknowledgement:}
The author  was supported by the National Natural	Science Foundation of China (No.12501101) and the Scientific Research Foundation of Chongqing University of Technology (No.2025ZDZ013).

\end{document}